\renewcommand{\theenumi}{\roman{enumi}}
\renewcommand{\p@enumii}{\theenumi--}
\newcommand{\Z}{\ensuremath{\mathbb{Z}}}
\newcommand{\N}{\ensuremath{\mathbb{N}}}
\newcommand{\Q}{\ensuremath{\mathbb{Q}}}
\newcommand{\R}{\ensuremath{\mathbb{R}}} 
\newcommand{\C}{\ensuremath{\mathbb{C}}}
\newcommand{\cH}{\ensuremath{\mathcal{H}}}
\newcommand{\cS}{\ensuremath{\mathcal{S}}}
\newcommand{\cT}{\ensuremath{\mathcal{T}}}
\newcommand{\lac}{\ensuremath{\lambda_{\text{can}}}}
\newcommand{\Ham}{\ensuremath{\text{Ham}}} 
\newcommand{\Hom}{\ensuremath{\text{Hom}}}
\begin{document} 
\sloppy
\begin{center} 
{\Large \bf Monodromy and isotopy of monotone Lagrangian tori} \\ 

\vspace{.3in} 
Mei-Lin Yau \footnote{Research 
Supported in part by National Science Council grants 
95-2115-M-008-012-MY2 and 97-2115-M-008-009-. 
 
{\em 2000 Mathematics Subject Classification}. Primary 53D12; Secondary 57R52, 57R17. 

{\em Key words and phrases}. Monotone Lagrangian torus; Hamiltonian monodromy group; infinite dihedral group; involutions; Maslov class.}

  \end{center} 
  
\vspace{.2in} 
\begin{abstract} 
We define new Hamiltonian isotopy invariants for a 2-dimensional monotone Lagrangian torus embedded in a symplectic 4-manifold. 
We show that, in the standard symplectic $\R^4$, these invariants distinguish a monotone Clifford torus from a Chekanov torus. 
\end{abstract}

\newtheorem{theo}{Theorem}[section]
\newtheorem{cond}[theo]{Condition}
\newtheorem{defn}[theo]{Definition}
\newtheorem{exam}[theo]{Example}
\newtheorem{lem}[theo]{Lemma}
\newtheorem{cor}[theo]{Corollary}

\newtheorem{prop}[theo]{Proposition}
\newtheorem{rem}[theo]{Remark}
\newtheorem{notn}[theo]{Notation}
\newtheorem{fact}[theo]{Fact}
\newtheorem{ques}[theo]{Question}

%
%

\section{Introduction} 

This article concerns the Hamiltonian isotopy problem of {\em monotone} Lagrangian tori, which is a special case of the Lagrangian knot problem as formulated by Eliashberg and Polterovich in \cite{EP3}.  Two Lagrangian tori $L_0,L_1$ embedded in a symplectic 4-manifold $(M,\omega)$ are said to be {\em Hamiltonian isotopic} if there exists a smooth isotopy of Hamiltonian diffeomorphisms 
$\phi_t\in \Ham(M)$ with compact support, $t\in [0,1]$, $\phi_0=id$, such that $\phi_1(L_0)=L_1$.

We study the monodromy group $\cH_L$ of the Hamiltonian self-isotopies of a monotone Lagrangian torus $L$. We define two new Hamiltonian isotopy invariants for $L$: the {\em twist number} $t(L)\in \N\cup\{ 0\}$ and the {\em spectrum} $s(L)\in \N\cup\{ 0\}$ of $L$. The twist number $t(L)$ is related to 
Dehn twists along a embedded curve with 0 Maslov number, while the spectrum $s(L)$ involves 
the Maslov number of the primitive integral 1-eigenvectors of involutions of $\cH_L$ (see Definition \ref{mf} and Proposition \ref{012}).

We then apply our construction to  the cases when $L\subset \R^4$ is either a monotone Clifford torus 
$T_{b,b}$ or a Chekanov torus $T'_{0,b}$ (called {\em special torus} in \cite{C1}) in the standard symplectic 4-space $\R^4$. We obtain the following:

\begin{theo}  \label{main} 
Let $b>0$. Let $\cH_b$ denote the Hamiltonian monodromy group of $T_{b,b}$, $\cH'_b$ the Hamiltonian monodromy group of $T'_{0,b}$. Then 
$\cH_b\cong \Z_2\cong\cH'_b$ as abstract groups and hence $t(T_{b,b})=0=t(T'_{0,b})$. 
However, 
\[ 
s(T_{b,b})=2, \quad s(T'_{0,b})=1. 
\] 
Hence $T_{b,b}$ and $T'_{0,b}$ are not Hamiltonian isotopic in $\R^4$. 
\end{theo}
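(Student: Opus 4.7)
The plan is to treat the theorem's three assertions separately: first identify $\cH_b$ and $\cH'_b$ as abstract groups, then use that to check $t(T_{b,b})=t(T'_{0,b})=0$ and compute the spectra, and finally invoke the Hamiltonian invariance of $s(\cdot)$ to conclude non-isotopy.

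For the monodromy groups, the lower bound $|\cH_b|\geq 2$ and $|\cH'_b|\geq 2$ comes from exhibiting explicit compactly supported Hamiltonian involutions. On the Clifford torus $T_{b,b}=\{|z_1|=|z_2|=\sqrt{b}\}$, the linear swap $(z_1,z_2)\mapsto(z_2,z_1)$ is symplectic and isotopic to the identity through symplectic maps, so it is the time-$1$ map of a Hamiltonian isotopy of $\R^4$ whose support can be localised to a compact neighbourhood of $T_{b,b}$; it preserves $T_{b,b}$ setwise and acts as the non-trivial transposition on the two standard generators of $H_1(T_{b,b};\Z)$. On the Chekanov torus $T'_{0,b}$, which is obtained by revolving a carefully chosen curve $\gamma\subset\C$ around an axis, the reflection of $\gamma$ across its symmetry axis similarly lifts to a compactly supported Hamiltonian involution of $\R^4$ preserving $T'_{0,b}$ and acting non-trivially on $H_1(T'_{0,b};\Z)$.

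The matching upper bound is the main obstacle. I would argue that the induced $\cH_L$-action on $H_1(L;\Z)\cong\Z^2$ must preserve the Maslov class $\mu_L\in H^1(L;\Z)$ and the monotone symplectic area class, and, crucially, must permute the finitely many homology classes supporting Maslov-$2$ pseudoholomorphic discs. In both cases this singles out a canonical finite configuration whose only non-trivial $\mathrm{GL}(2,\Z)$-symmetry is the involution produced above, forcing $\cH_b\cong\Z_2\cong\cH'_b$. The twist number $t(L)$ then vanishes in both cases because the non-trivial element acts as a linear reflection of $H_1$ rather than as any power of a Dehn twist along an embedded Maslov-$0$ curve. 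The spectrum is read off the $+1$-eigenspaces: for the Clifford torus the swap has $+1$-eigenvector $(1,1)$ in the factor basis, contributing $s(T_{b,b})=2$, while for the Chekanov torus, working in the basis adapted to its distinguished Maslov-$0$ cycle and Maslov-$2$ disc class, the $+1$-eigenspace of the reflection involution yields $s(T'_{0,b})=1$.

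Since $s(\cdot)$ is a Hamiltonian isotopy invariant of monotone Lagrangian tori by its very construction, the inequality $s(T_{b,b})\neq s(T'_{0,b})$ rules out the existence of any compactly supported Hamiltonian isotopy of $\R^4$ sending $T_{b,b}$ to $T'_{0,b}$, completing the proof.
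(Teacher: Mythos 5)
Your overall skeleton is the right one --- exhibit an explicit compactly supported Hamiltonian involution to get $|\cH_L|\geq 2$, rule out everything else in $G_\mu$, and then read $s(L)$ off the $+1$-eigenvector of the involution --- and your lower-bound constructions (the unitary swap for $T_{b,b}$, a reflection-type involution for $T'_{0,b}$) and your eigenvector computations ($\gamma+\gamma'$ of Maslov index $4$ giving $s=2$; the disc class $\gamma$ of Maslov index $2$ giving $s=1$) agree with the paper. The final step, quoting the Hamiltonian invariance of $s(\cdot)$, is also exactly the paper's Lemma 2.14.

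The genuine gap is the upper bound $|\cH_L|\leq 2$, which is the crux of the theorem and which you only gesture at. You assert that the monodromy ``must permute the finitely many homology classes supporting Maslov-$2$ pseudoholomorphic discs'' and that this configuration has only the one nontrivial symmetry. As written this is not a proof: you would need to (a) establish that the set of Maslov-$2$ disc classes (or rather the classes with nonzero, regularly defined counts) is finite and is an invariant of the pair $(\R^4,L)$ under $\Ham(\R^4,L)$ --- this requires monotonicity, Gromov compactness and a transversality/invariance argument that you do not supply; (b) actually compute this configuration for both tori (two classes $\gamma,\gamma'$ for the Clifford torus, a single class for the Chekanov torus); and (c) verify the stabilizer computation in $G_\mu\cong D_\infty$. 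None of (a)--(c) appears in your write-up, and (a) in particular is a substantial piece of pseudoholomorphic-curve theory. The paper takes a different and more elementary route: it identifies a Weinstein neighborhood of $L$ with a piece of $T^*(\R/\Z\times\R/\Z)$, observes that a monodromy $g_k$ ($k\neq 0$) would carry the nearby fiber tori $L_{c_1,c_2}$ --- which are themselves Clifford tori (up to Hamiltonian isotopy, via the explicit charts $\Phi$, $\Psi\circ\Phi'$) --- to Clifford tori with different radii, and then invokes Chekanov's classification of Clifford tori up to Hamiltonian isotopy (Proposition \ref{Tab}) to derive a contradiction; the group-theoretic Lemma \ref{H} (a subgroup of $D_\infty$ with no twists has at most one involution, since $f_kf_l=g_{k-l}$) then finishes the job. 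If you want to salvage your disc-count route you must either import the relevant counting theorems (Chekanov--Schlenk / Eliashberg--Polterovich style) with precise statements, or replace that step by the neighborhood-plus-classification argument above.

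One small correction: the Clifford torus is $T_{b,b}=\{|z_1|=|z_2|=b\}$, not $\{|z_1|=|z_2|=\sqrt{b}\}$; and note that once $\cH_L\cong\Z_2$ is known, $t(L)=0$ is immediate because a twist has infinite order, so no separate argument about ``reflections versus Dehn twists'' is needed.
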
 
Thus our approach provides a new way to distinguish  $T_{b,b}$ from  
$T'_{0,b}$ up to  Hamiltonian isotopy.

It should be pointed out that, the Hamiltonian non-isotopy between $T_{b,b}$ and $T'_{0,b}$ has been proved by Chekanov \cite{C1,C2}. Chekanov gave two proofs on this result. The first proof in \cite{C1}  utilized the symplectic 
capacities introduced by Ekeland and Hofer \cite{EkH1,EkH2}, whilst the second 
proof \cite{C2} employed  pseudoholomorphic curves with boundaries \cite{G}. 
Indeed, Chekanov dealt with Clifford tori and Chekanov tori in symplectic $\R^{2n}$ with $n\geq 2$, and completely classified such tori in all $\R^{2n}$. 

Comparing with Chekanov's proofs, our approach is more algebraic in nature, and 
seemingly simpler and more elementary.  For technical simplicity, we do not venture into  higher dimensional cases here. However, we expect that, given suitable generalization, invariants similar to $t(L)$ and $s(L)$ can be defined for monotone Lagrangian 
tori of general dimensions. We hope to come back to this topic later. 

This paper is organized as follows: In Section \ref{mono} we start with some conditions on symplectic 4-manifolds, in order for the monotonicity of a Lagrangian 
torus $L$ to be well-defined. We then proceed to define the Hamiltonian monodromy group $\cH$ of a monotone torus $L$ and study its properties. Then follows the definition of the invariants $t(L),s(L)$. 
In Section \ref{TT'} we determine the Hamiltonian monodromy group as well as  
the values of $t(L),s(L)$ for  $L=T_{b,b}$ (Lemma \ref{Tb}) and $L=T'_{0,b}$ 
(Lemma \ref{T'b}). We end up this note with several open questions.

%
%

\section{Hamiltonian monodromy of monotone Lagrangian tori}  \label{mono}

Let $L\overset{\iota}{\hookrightarrow}M$ be an embedded Lagrangian torus in a symplectic 4-manifold $(M,\omega)$. One can endow the tangent bundle of $M$ with an almost complex structure compatible with $\omega$. This turns $TM$ into a complex vector bundle of which the Chern classes depend only on $\omega$. 
From now on, unless otherwise mentioned, we assume that $M$ satisfies the following two conditions that (i) the first Chern class $c_1(M)=c_1(TM)\in H^2(M,\Z)$ vanishes and (ii) $H^1(M,\R)=0$. 

That $c_1(M)=0$ ensures that 
the Maslov class $\mu\in H^1(L,\Z)$ is well-defined. There is a unique integer $m_L\geq 0$ such that $\mu(H_1(L,\Z))=m_L\Z$. We call $m_L$ the {\em divisibility} of  $\mu$ on $L$. 

Near $L$ the symplectic form $\omega$ is exact, i.e., there exits a 1-form $\lambda$ defined on a tubular neighborhood $U_L$ of $L$ oh which  $\omega=d\lambda$. 
The pull-back 1-form $\iota^*\lambda\in \Omega^1(L)$ is closed, we 
denote its cohomology class in $H^1(L,\R)$ as $\alpha$. The class $\alpha$ 
is independent of the choice of $\lambda$ due to the assumption that $H^1(M,\R)=0$.

\begin{defn} 
{\rm 
Assume that $\mu\neq 0$ and 
$\alpha\neq 0$. Then $L$ is {\em monotone} if $\alpha=c\mu$ for some $c\in\R\setminus \{ 0\}$. 
}
\end{defn} 
Let $\Ham(M,L)$ denote the group of all symplectomorphisms $\phi:(M,\omega) \to (M,\omega) $ such 
that  $\phi(L)=L$ and $\phi$ is the time one map of some time dependent Hamiltonian vector field on $M$, and the vector field has compact support. 

A map $\phi\in\Ham(M,L)$ 
induces  an isomorphism on $H^1(L,\Z)\cong\Z^2$ and hence on $H^1(L,\R)$, preserving both 
$\mu$ and $\alpha$. If $\mu$ and $\alpha$ are $\R$-linearly independent then 
$\phi^*=id$ on both $H^1(L,\Z)$ and $H^1(L,\R)$. {\em Below we consider the monotone case only}. For $\phi\in\Ham(M,L)$ we call the induced isomorphism  
$\phi_*:H_1(L,\Z)\to H_1(L,\Z)$ the {\em monodromy} of $\phi$.

\begin{defn} 
{\rm 
We define the {\em Hamiltonian monodromy group} of $L\subset M$ (or $\Ham(M,L)$) to be 
\[ 
\cH_L=\{ \phi_*\in \text{Isom}(H_1(L,\Z))\mid \phi\in \Ham(M,L)\}. 
\] 
Consider the subgroup of $\text{Isom}(H_1(L,\Z))$: 
\[ 
G_\mu:=\{ g\in \text{Isom}(H_1(L,\Z))\mid \mu\circ g=\mu\}. 
\] 
Clearly $\cH_L$ is a subgroup of $G_\mu$. 
}
\end{defn}

If we fix a basis for $H_1(L,\Z)$ then the group $\text{Isom}(H_1(L,\Z))$ is identified with the integral general linear group 
\[ 
GL(2,\Z)=\Big\{ \begin{pmatrix} a& b \\ c & d\end{pmatrix}\mid a,b,c,d\in \Z, \ 
ad-bc=\pm1\Big\} . 
\] 
In the following we fix a basis $\gamma,\sigma$ for $H_1(L,\Z)$ so that 
$\mu(\gamma)=\min\{ \mu(\eta)\mid \eta\in H_1(L,\Z), \ \mu(\eta)>0\}$, 
and $\sigma\in \ker\mu$.  We identify an element $p\gamma+q\sigma$ of 
$H_1(L,\Z)$ with the column vector $(p,q)^t\in \Z^2$. Then $\mu$ is identified 
with the row vector $(m_L, 0)\in \Hom(H_1(L,\Z),\Z)\cong \Z^2$ where $m_L=\mu(\gamma)$, and 
$\phi_*$ is identified with an element of $GL(2,\Z)$ such that 
$\mu\circ \phi_*=\mu$. We have the following:

\begin{prop} \label{Gmu}    
The group $G_\mu:=\{ g\in GL(2,\Z)\mid \mu \circ g=\mu\}$ is isomorphic 
to the infinite dihedral group $D_\infty:=\langle f_0,f_1\mid f_0^2=e=f_1^2\rangle=\Z_2*\Z_2$ (see \cite{Hum}), 
where $e$ denote the identity element. 
\end{prop}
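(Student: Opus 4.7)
The plan is to translate the defining condition on $g$ into an explicit matrix form, exhibit two obvious involutions, and then recognize that the resulting presentation is precisely that of $D_\infty$.

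First, I would write $g=\begin{pmatrix} a & b\\ c & d\end{pmatrix}\in GL(2,\Z)$ and use the chosen basis to express the condition $\mu\circ g=\mu$ as $(m_L,0)g=(m_L,0)$. Since $m_L>0$, this forces $a=1$ and $b=0$, and the constraint $\det g=\pm 1$ then gives $d=\pm 1$ with $c\in\Z$ arbitrary. Hence
\[
G_\mu=\left\{\begin{pmatrix} 1 & 0\\ c & \e\end{pmatrix}\;\middle|\; c\in\Z,\ \e\in\{\pm1\}\right\}.
\]

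Next I would single out two obvious involutions,
\[
f_0:=\begin{pmatrix} 1 & 0\\ 0 & -1\end{pmatrix},\qquad f_1:=\begin{pmatrix} 1 & 0\\ 1 & -1\end{pmatrix},
\]
verifying by direct computation that $f_0^2=f_1^2=I$, and that
\[
f_0 f_1=\begin{pmatrix} 1 & 0\\ -1 & 1\end{pmatrix},\qquad (f_0f_1)^n=\begin{pmatrix} 1 & 0\\ -n & 1\end{pmatrix}\text{ for all } n\in\Z,
\]
so that $f_0f_1$ has infinite order in $G_\mu$. These relations give a well-defined surjective homomorphism $\Phi\colon D_\infty=\langle f_0,f_1\mid f_0^2=f_1^2=e\rangle\to G_\mu$ from the presentation, once I check that the image generates $G_\mu$: the translation subgroup $\{\begin{pmatrix}1&0\\c&1\end{pmatrix}\}$ is produced as powers of $f_0f_1$, and multiplying by $f_0$ on the left yields every matrix with $\e=-1$.

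For injectivity I would use the standard normal form in $D_\infty$: every nontrivial element is either $(f_0f_1)^n$ with $n\in\Z\setminus\{0\}$, or an involution of odd-length form $(f_0f_1)^n f_0$ or $(f_0f_1)^n f_1$. The first case maps to $\begin{pmatrix}1&0\\-n&1\end{pmatrix}\neq I$ since $n\neq 0$, and the two odd-length cases map to matrices of the form $\begin{pmatrix}1&0\\c&-1\end{pmatrix}$, which are never the identity. Thus $\ker\Phi=\{e\}$, and $\Phi$ is an isomorphism. I do not anticipate any genuine obstacle; the only place to be careful is the bookkeeping of normal forms to rule out nontrivial relations, which is why invoking the standard presentation of $D_\infty$ (as in the cited reference) is the cleanest route.
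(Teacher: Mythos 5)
\begin{quote}
Your proof is correct and follows essentially the same route as the paper: compute $G_\mu$ explicitly as the matrices $\bigl(\begin{smallmatrix}1&0\\c&\pm1\end{smallmatrix}\bigr)$, exhibit the two involutions $f_0,f_1$, and identify the group with $D_\infty$. The only difference is that you carry out the injectivity check via normal forms in $D_\infty$, a verification the paper explicitly leaves to the reader, so your write-up is if anything slightly more complete.
\end{quote}
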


\begin{proof} 
A direct computation (identifying $\mu$ with $(m_L,0)$) shows that, with respect to the basis $\{ \gamma, \sigma\}$ for $H_1(L,\Z)$  as described above,  
\[ 
G_\mu=\Big\{ \begin{pmatrix} 1& 0 \\ k & \pm 1\end{pmatrix}\mid k\in \Z \ 
\Big\}  . 
\] 
Let $f_k:=\begin{pmatrix} 1& 0 \\ k & -1\end{pmatrix}$ and 
$g_k:=\begin{pmatrix} 1& 0 \\ k &  1\end{pmatrix}$. We have 
\begin{enumerate} 
\item $f_k^2=e$ and $g_k=g_1^k$ for $k\in \Z$,  $g_k$ has infinite order for $k\neq 0$,   
\item $g_1=f_1f_0$ and hence $g_{-1}=g^{-1}_1=f_0f_1$,   
\item $f_{k+1}=g_kf_1=(f_1f_0)^kf_1=f_1g_{-k}$ and $f_{-k}=g_{-k}f_0=(f_0f_1)^kf_0=f_0g_k$ for $k\geq 0$. 
\end{enumerate} 
Readers can check that $G_\mu$ is indeed freely generated by the two elements 
$f_0$ and $f_1$ of order 2, hence $G_\mu$ is isomorphic to the infinite dihedral group. 
\end{proof} 

\begin{rem} \label{geom}  
{\rm 
Geometrically $g_k$ is a ($\pm k$)-Dehn twist along the curve representing the class $\sigma$. 
The $\pm$-sign ambiguity is to the non-uniqueness of $\sigma$ which is unique only up to signs. 
} 
\end{rem}

\begin{rem}  \label{tilde}   
{\rm 
Sometimes it is more convenient to consider a different basis $\{ \gamma, \gamma':=\gamma+\sigma\}$ for $H_1(L,\Z)$. The transformation matrix from basis $\{ \gamma, \gamma'\}$ to basis $\{ \gamma,\sigma\}$ is $T:=\begin{pmatrix}
1 & 1\\ 0 & 1\end{pmatrix}$. We use $\tilde{\ \   }$ to denote the matrix representation 
with respect to the basis $\{ \gamma,\gamma'\}$. Then for $k\in \Z$, 
\[ 
\tilde{f}_k:=T^{-1}f_kT=\begin{pmatrix} 1-k & 2-k \\ k & k-1\end{pmatrix}, \quad 
\tilde{g}_k:=T^{-1}g_kT=\begin{pmatrix} 1-k & -k \\ k & 1+k\end{pmatrix}.
\] 
} 
\end{rem}

\begin{lem} \label{fkfl}   
Let $f_k,g_k$ be as in the proof of Proposition \ref{Gmu}. Then $f_kf_l=g_{k-l}$ and hence $f_k=g_{k-l}f_l$ for $k,l\in \Z$. 
\end{lem}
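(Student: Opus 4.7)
The plan is to reduce everything to the explicit matrix representatives from the proof of Proposition~\ref{Gmu} and do a one-line computation. Using $f_k=\begin{pmatrix}1 & 0\\ k & -1\end{pmatrix}$ and $g_k=\begin{pmatrix}1 & 0\\ k & 1\end{pmatrix}$, I would simply multiply:
\[
f_k f_l=\begin{pmatrix}1 & 0\\ k & -1\end{pmatrix}\begin{pmatrix}1 & 0\\ l & -1\end{pmatrix}=\begin{pmatrix}1 & 0\\ k-l & 1\end{pmatrix}=g_{k-l}.
\]
The second assertion then follows by right-multiplying the identity $f_kf_l=g_{k-l}$ by $f_l$ and invoking $f_l^2=e$ from item~(i) of the proof of Proposition~\ref{Gmu}.

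If one prefers a group-theoretic derivation that does not revisit matrices, the same conclusion drops out of the relations already recorded in the proof of Proposition~\ref{Gmu}. Namely, item~(iii) gives $f_k=g_kf_0$ for every $k\in\Z$, while a direct check (or the identity $g_1=f_1f_0$ together with the $f_0^2=e$ relation) yields the conjugation formula $f_0g_lf_0=g_{-l}$. Combining these,
\[
f_kf_l=(g_kf_0)(g_lf_0)=g_k\bigl(f_0g_lf_0\bigr)=g_kg_{-l}=g_{k-l},
\]
where the last equality uses $g_kg_m=g_{k+m}$ from item~(i).

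There is really no obstacle to overcome: both routes are a few lines of routine bookkeeping. The only place one could go astray is sign conventions, so the one thing I would be careful about is to verify $f_0g_l=g_{-l}f_0$ directly before assembling the chain of equalities, to make sure the dihedral conjugation is applied in the correct direction.
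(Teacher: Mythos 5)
Your proposal is correct, and your second (group-theoretic) derivation is essentially the paper's own argument: the paper inserts $f_0f_0=e$ and writes $f_kf_l=(f_kf_0)(f_0f_l)=g_kg_{-l}=g_{k-l}$ using item~(iii) of Proposition~\ref{Gmu}, which is the same bookkeeping you perform via $f_k=g_kf_0$ and $f_0g_lf_0=g_{-l}$. Your first route (direct matrix multiplication) is an equally valid, even more immediate verification, and both of your sign checks come out right.
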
 

\begin{proof} 
By applying (iii) from the proof of Proposition \ref{Gmu}, we have $f_kf_0=g_k$ and $f_0f_l=g_{-l}$ for $k,l\in \Z$. Hence $f_kf_l=(f_kf_0)(f_0f_l)=g_kg_{-l}=g_{k-l}$. i.e., $f_k=g_{k-l}f_l$, for $k,l\in \Z$. 
\end{proof}

Lemma \ref{fkfl} in particular implies a different (but well-known) way of expressing $G_\mu\cong D_\infty$ by  generators and  relations: 
\begin{equation} 
G_\mu\cong D_\infty=\langle f, g \mid f^2=e, \  fgf=g^{-1}\rangle .  
\end{equation}   

\begin{defn} 
{\rm 
An element $f\in G_\mu$ is called an {\em  involution} if $f^2=e$ and $f\neq e$, a {\em twist} if 
$\text{ord}(f)=\infty$. 
}
\end{defn} 

Then, with  the matrix representations adapted in the proof of Proposition \ref{Gmu}, 
$f_k\in G_\mu$ is an involution for any $k\in\Z$, and $g_k$ a twist for any $k\in \Z\setminus \{ 0\}$. 

\begin{lem}  \label{H}  
Let $\cH\subset G_\mu$ be a subgroup of $G_\mu$. We adapt the notations $f_k,g_k$ from the proof of Proposition \ref{Gmu}. Then we have the following classification of $\cH$: 
\begin{enumerate} 
\item $\cH=\{e\}$. 
\item If $\cH\neq \{e\}$ and $\cH$ has no involutions, then $\cH=\langle g_d\rangle \cong \Z$ for some $d\in \N$. 
\item If $\cH$ contains exactly one involution say, $f_k$,  then $\cH=\langle f_k\rangle\cong \Z_2$. 
\item If $\cH$ contains two involutions, then there exist some $k, d\in \N$ such that 
$\cH=\langle f_k, g_d\rangle\cong D_\infty$. 
\end{enumerate} 
\end{lem}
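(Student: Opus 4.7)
The plan is to encode $\cH$ by two subsets of $\Z$:
\[
T := \{ k \in \Z \mid g_k \in \cH\}, \qquad I := \{ k \in \Z \mid f_k \in \cH\}.
\]
By Proposition \ref{Gmu}, every element of $G_\mu$ is either some $g_k$ or some $f_k$, so the pair $(T, I)$ determines $\cH$ completely. First I would check that $T$ is a subgroup of $\Z$ (immediate from $g_k g_l = g_{k+l}$), hence $T = d\Z$ for a unique $d \in \N \cup \{0\}$. Then using Lemma \ref{fkfl}, the identity $f_k f_l = g_{k-l}$ together with its rearrangement $f_k = g_{k-l} f_l$ forces $I$, if nonempty, to be a single coset of $T$ in $\Z$; the mixed product $g_m f_l = f_{m+l}$ is consistent with this.

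The four cases of the lemma then correspond exactly to the four choices of ``$d = 0$ versus $d \geq 1$'' and ``$I = \emptyset$ versus $I \neq \emptyset$''. The combinations $(d=0, I=\emptyset)$, $(d\geq 1, I=\emptyset)$, and $(d=0, I\neq \emptyset)$ immediately give cases (i), (ii), and (iii) respectively; in particular, in case (iii) a coset of $\{0\}$ is a singleton $\{k\}$, so $\cH = \{e, f_k\}$, and in case (ii) $\cH = \{g_{nd} : n \in \Z\} = \langle g_d\rangle \cong \Z$ because $g_d$ has infinite order for $d\geq 1$.

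Case (iv) is the substantive one: here $d \geq 1$ and $I = k_0 + d\Z$ for some $k_0 \in \Z$. I would pick a representative $k \in I \cap \N$ (shifting by a multiple of $d$ via $f_{k_0 + nd} = g_{nd} f_{k_0}$) and then argue that every element of $\cH$ has the form $g_d^n$ or $g_d^n f_k$, which is forced by the coset descriptions of $T$ and $I$. Hence $\cH = \langle f_k, g_d\rangle$. Combined with $f_k^2 = e$ and the conjugation relation $f_k g_d f_k = g_d^{-1}$ --- a direct consequence of Lemma \ref{fkfl} --- these data yield a surjection $D_\infty \twoheadrightarrow \cH$; since $g_d$ has infinite order in $G_\mu$ and $f_k \notin \langle g_d\rangle$, this surjection has trivial kernel and is therefore an isomorphism.

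The main obstacle I anticipate is the coset bookkeeping in case (iv): verifying that $I$ really is a full coset of $T$ (no more, no less) and that the two relations $f_k^2 = e$, $f_k g_d f_k = g_d^{-1}$ suffice to identify $\cH$ with the full $D_\infty$ rather than with some proper quotient. Both reduce cleanly to the multiplication table encoded in Proposition \ref{Gmu} and Lemma \ref{fkfl}, so the proof should be essentially a careful unpacking of that structure.
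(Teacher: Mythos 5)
Your proposal is correct and follows essentially the same route as the paper: both arguments turn entirely on the relations $f_kf_l=g_{k-l}$ and $f_{m+l}=g_mf_l$ from Lemma \ref{fkfl}. Your bookkeeping via the pair $(T,I)$, with $T=d\Z$ a subgroup of $\Z$ and $I$ a coset of $T$, is a slightly tidier packaging of the paper's case (iv), which instead defines $d$ as the minimal gap between indices of distinct involutions and then verifies the same two divisibility statements before concluding $\cH=\langle f_k,g_d\rangle\cong D_\infty$.
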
 

\begin{proof} 
For computational simplicity, we adapt the matrix representations and notations from the proof of 
Proposition \ref{Gmu}. 

Assume that $\cH$ has no involutions. Then $\cH$ is a subgroup of the free abelian group $\langle g_1\rangle$ generated by $g_1\in G_\mu$. Hence $\cH=\langle g_d\rangle$ for some $d\in \N$, $\cH\cong \Z$. 

Now suppose that $\cH$ contains exactly one involution say, $f_k$, for some  $k\in \Z$. We claim that $\cH=\{ e,f_k\}$ and hence $\cH\cong \Z_2$. Otherwise, we would have $g_l\in \cH$ for some $l\in \Z$, $l\neq 0$. But then $f_{k+l}=g_lf_k\in \cH$ by Lemma \ref{fkfl}, which contradicts with our assumption on $\cH$. Hence  $\cH=\{ e,f_k\}\cong \Z_2$ for some $k\in \Z$. 

Finally, assume that $\cH$ contains more than one involution. Let 
\[ 
d:=\min \{ |k-l|\mid f_k,f_l\in \cH, \ f_k\neq f_l\}\in \N 
\] 
Fix a number $k\in \Z$ such that $f_k,f_{k+d}\in \cH$. We claim that $\cH$ is generated by $f_k$ and $f_{k+d}$. More precisely we will show that  
\begin{enumerate} 
\item $g_m\in \cH$ iff $d$ divides $m$, 
\item $f_n\in \cH$ iff $d$ divides $n-k$.  
\end{enumerate} 
Assume that $m=ds$ for some $s\in \Z$. Then $g_m=g_{ds}=g_d^s=(f_{k+d}f_k)^s\in \cH$. Similarly, assume that $n=k+dr$ for some $r\in \Z$. then $f_n=f_{k+dr}=g_{dr}f_k\in\cH$. Conversely, assume that $g_m\in \cH$ fro some $m=ds+r\in \Z$ with 
$d,s,r\in \Z$, $0<r<d$. Then $g_r=g_mg_{-ds}\in \cH$ and $f_{k+r}=g_rf_k\in \cH$. But then $|(k+r)-k|<d$,  which contradicts with the minimality of $d$. Similarly, 
if $f_n\in \cH$ for some $n\in \Z$, then $g_{n-k}=f_nf_k\in \cH$, which implies that $d$ divides $n-k$. This verifies the claim. So $\cH$ is freely generated by $f_k,f_{k+d}$,  two elements of order 2. Hence $\cH$ is isomorphic to $D_\infty$. Since $g_d=f_{k+d}f_k$, $\cH$ is also generated by $f_k$ and $g_d$. 
Finally, replacing $f_k$ by $f_{k+sd}=g_{sd}f_k$ for some  $s\in \N$ large enough if necessary, we may 
assume that $k>0$ 
This completes the proof. 
\end{proof}

\begin{defn}   \label{mf}   
{\rm 
Let  $f\in \cH_L$  be  an involution. Then 
$f$ is diagonalizable with eigenvalues $1,-1$. Let $\text{Fix}(f):=\{ \eta\in H_1(L,\Z)\mid f(\eta)=\eta\}$.  
There is a unique element $\eta_f\in \text{Fix}(f)$ such that $\text{Fix}(f)$ is generated by $\eta_f$ and 
$\mu(\eta_f)>0$. Define 
\[ 
m_f:=\mu(\eta_f)/m_L\in \N. 
\] 
}
\end{defn} 

Recall that $\sigma\in H_1(L,\Z)$ denotes a fixed primitive class with $\mu(\sigma)=0$. 

\begin{prop} \label{012}  
Let $m_f$ be as defined above.  Then $m_f=1$ or $2$. Moreover, if we fix an arbitrary  basis for $H_1(L,\Z)$ and represent $\eta_f,\sigma$ respectively as the first and second column vectors of an 
integral $2\times 2$ matrix $A$, then $m_f=|\det(A)|$ the absolute value of the determinant 
of $A$. 
\end{prop}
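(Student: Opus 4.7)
\medskip
\noindent\textbf{Proof proposal.} The plan is to combine the explicit matrix description of $G_\mu$ from Proposition~\ref{Gmu} with an elementary parity analysis. Since $\cH_L$ is a subgroup of $G_\mu$, any involution $f\in\cH_L$ must equal some $f_k=\begin{pmatrix}1&0\\k&-1\end{pmatrix}$ when written in the basis $\{\gamma,\sigma\}$ fixed before Proposition~\ref{Gmu}. First I would determine $\text{Fix}(f_k)$ by solving $f_k\eta=\eta$ directly: writing $\eta=(p,q)^t$, the equation collapses to $kp=2q$. A case split on the parity of $k$ then pins down the primitive generator $\eta_f$ with $\mu(\eta_f)>0$. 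When $k$ is even, every $p\in\Z$ gives an integral $q$, and the primitive choice is $\eta_f=(1,k/2)^t$; since $\mu$ is represented by $(m_L,0)$, this yields $\mu(\eta_f)=m_L$ and $m_f=1$. When $k$ is odd, $p$ must be even for $q$ to be integral, and the primitive choice is $\eta_f=(2,k)^t$ (primitive since $\gcd(2,k)=1$), giving $\mu(\eta_f)=2m_L$ and $m_f=2$. This establishes $m_f\in\{1,2\}$.

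For the determinant formula, my first step would be to observe that $|\det A|$ does not depend on the chosen basis of $H_1(L,\Z)$. If $P\in GL(2,\Z)$ is the change-of-basis matrix, then $\eta_f$ and $\sigma$ are represented in the new basis by $P^{-1}\eta_f$ and $P^{-1}\sigma$, so the new matrix becomes $P^{-1}A$, and $|\det(P^{-1}A)|=|\det A|$ because $\det P=\pm 1$. It therefore suffices to verify the identity in the specific basis $\{\gamma,\sigma\}$ used above. In that basis $\sigma=(0,1)^t$ and $\eta_f=(p,q)^t$ with $p=m_f$ from the computation above, so
\[
A=\begin{pmatrix}p&0\\q&1\end{pmatrix},\qquad |\det A|=p=m_f,
\]
which completes the argument.

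I do not foresee any genuine obstacle here; the only subtleties are confirming primitivity of $(2,k)^t$ when $k$ is odd (immediate from $\gcd(2,k)=1$) and fixing the sign of $p$, which is forced by the convention $\mu(\eta_f)>0$ together with the representation $\mu=(m_L,0)$ in the chosen basis.
\medskip
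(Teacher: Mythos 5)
Your argument is correct, but it takes a different route from the paper's. You reduce to the explicit normal form $f=f_k=\begin{pmatrix}1&0\\k&-1\end{pmatrix}$ available in the distinguished basis $\{\gamma,\sigma\}$ (legitimate, since $\cH_L\subset G_\mu$ and every involution of $G_\mu$ is some $f_k$ by Proposition~\ref{Gmu}), solve $kp=2q$ with a parity split to get $\eta_f=(1,k/2)^t$ or $(2,k)^t$, and then transport the determinant identity to an arbitrary basis via $|\det(P^{-1}A)|=|\det A|$. This is essentially the computation the paper relegates to Example~\ref{f0f1} \emph{after} the proposition, promoted to a proof of the proposition itself. The paper's own proof is basis-free from the start: it writes $f=A\,\mathrm{diag}(1,-1)\,A^{-1}$ for the matrix $A$ with columns $\eta_f,\sigma$ in an arbitrary basis, deduces $\det A\mid 2$ from integrality of the conjugate together with the coprimality of the entries of each primitive column, and then extracts $m_f=|\det A|$ from the linear equations satisfied by the components of $\mu$. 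Your version is more elementary and shorter, but it depends on the explicit classification of $G_\mu$ in two dimensions; the paper's conjugation-and-coprimality argument does not use that classification and is the kind of reasoning that would survive a generalization to higher-dimensional tori, which the introduction flags as a goal. Both proofs are complete; the only points you rightly had to check (primitivity of $(2,k)^t$ for odd $k$, and the sign of $\eta_f$ forced by $\mu(\eta_f)>0$) are handled correctly.
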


\begin{proof}
Fix a basis for $H_1(L,\Z)\cong \Z^2$ and express  $\eta_f=\begin{pmatrix} a\\ c\end{pmatrix}$ 
and $\sigma=\begin{pmatrix} b\\ d\end{pmatrix}$ as column vectors with respect to the basis. 
Let $m:=\det A=ad-bc=\det \begin{pmatrix} a & b \\ c & d\end{pmatrix}\in \Z\setminus \{ 0\}$. 
Now in matrix form 
\[ 
f=\begin{pmatrix} a & b \\ c & d\end{pmatrix} \begin{pmatrix} 1& 0 \\ 0 & -1\end{pmatrix} 
\begin{pmatrix} a & b \\ c & d\end{pmatrix}^{-1} 
=\begin{pmatrix} 1+\frac{2bc}{m} & -\frac{2ab}{m} \\ \frac{2cd}{m} & 1-\frac{2ad}{m}\end{pmatrix}\in GL(2,\Z)
\] 
So $\frac{ab}{m}, \frac{ad}{m},\frac{cb}{m},\frac{cd}{m}\in\frac{1}{2}\Z$.  Note that 
$b,d$ are coprime, so there exist $r_1,r_2\in \Z$ such that $r_1b+r_2d=1$. Then 
$r_1\cdot \frac{ba}{m}+r_2\cdot \frac{da}{m}=\frac{a}{m}\in\frac{1}{2}\Z$ and 
$r_1\cdot \frac{bc}{m}+r_2\cdot \frac{dc}{m}=\frac{c}{m}\in\frac{1}{2}\Z$. Since 
$a,c$ are coprime we must have $m\mid 2$.

Elements of $H_1(L,\Z)$ are represented as column vectors with integral coefficients. Let $n_1,n_2\in \Z$ so that 
$n_1m_L=\mu\Big(\begin{pmatrix} 1\\ 0\end{pmatrix}\Big)$ and $n_2m_L=\mu\Big(\begin{pmatrix} 0\\ 1\end{pmatrix}\Big)$. By definition we have 
$\mu\Big(\begin{pmatrix} a\\ c\end{pmatrix}\Big)=m_fm_L$ and 
$\mu\Big(\begin{pmatrix} b\\ d\end{pmatrix}\Big)=0$. So 
\[ 
\begin{pmatrix} a & c\\ b & d\end{pmatrix} \begin{pmatrix} n_1\\ n_2\end{pmatrix} =
\begin{pmatrix} m_f\\ 0\end{pmatrix}, \quad \text{i.e.,} \quad 
\begin{pmatrix} n_1\\ n_2\end{pmatrix} =\frac{m_f}{m}\begin{pmatrix} d \\ -b\end{pmatrix} . 
\] 
Note that $n_1,n_2$ are coprime since $\mu(H_1(L,\Z))=m_L\Z$. 
Also, $0=\mu\Big(\begin{pmatrix} b\\ d\end{pmatrix}\Big)=bn_1+dn_2$, so we have 
$\begin{pmatrix} n_1\\ n_2\end{pmatrix} =\pm \begin{pmatrix} d\\ -b\end{pmatrix}$, hence 
$m_f=|m|=|\det(A)|$ which equals 1 or 2. 

Now if we choose another basis for $H_1(L,\Z)$ and correspondingly represent the ordered pair 
$(\eta_f,\sigma)$ by a matrix $A'$. Then $A'=BA$ for some $B\in GL(2,\Z)$, hence 
$|\det(A')|=|\det(A)|=m_f$. This completes the proof. 
\end{proof}

\begin{exam} \label{f0f1}  
Let  $f_k=\begin{pmatrix} 1 & 0\\ k & -1\end{pmatrix}$ be as defined in the proof of Proposition \ref{Gmu}. Then $m_{f_k}=\begin{cases} 1 & \text{ if $k$ is even}, \\ 2 & \text{ if $k$ is odd}. \end{cases}$ 
\end{exam} 
This is a straightforward computation. 
Let $f=f_k$. 
Write $\eta_f=\begin{pmatrix}a \\ b\end{pmatrix}$ then $\eta_f=f(\eta_f)=\begin{pmatrix} a \\ ak-b
\end{pmatrix}$. Since $a,b$ are coprime we have that $a=1$ (the positive sign follows from 
 $\mu(\eta_f)>0$) provided that $b=0$. In this case we obtain that $\mu(\eta_f)=m_L$ and hence 
 $m_f=1$. 
 
 If $b\neq 0$ then, since $ak=2b$ we have $k\neq 0$ and $\begin{pmatrix}a \\ b\end{pmatrix}
 =t\cdot \begin{pmatrix}2 \\ k \end{pmatrix}$ for some $t\in \Q$. It follows that 
 \begin{enumerate} 
 \item $t=1$ if $k$ is odd, then $\mu(\eta_f)=2m_L$, $m_f=2$; 
 \item $t=\frac{1}{2}$ if $k$ is even and nonzero, then $\mu(\eta_f)=m_L$, $m_f=1$. 
 \end{enumerate} 
This competes the computation.

From the perspective of the Maslov class $\mu$, involutions of $G_\mu$ fall into two different types 
according to their values of $m_f$. Thus in addition to the group type of $\cH_L$ the Hamiltonian monodromy group of $L$ as listed in Lemma \ref{H}, $m_f$ can be used to construct further invariants for $\cH_L$ provided that some element of $\cH$ is an involution.  Also, if $\cH_L$ contains some twist elements, the  twist number can also be 
defined for $\cH_L$. 
Below we define the new invariants for $L$.

\begin{defn}  \label{st}   
{\rm 
Let  $L$ be monotone and $\cH_L$ its Hamiltonian monodromy group. We adapt the notations $f_k,g_k$ from the proof of Proposition \ref{Gmu}. 

Let $\cT\subset \cH$ be the subset of all twists of $\cH$. 
The {\em twist number} of $\cH_L$ is defined to be 
\[ 
t(L):=\begin{cases} d=\min \{ k>0\mid g_k\in \cT\} & \text{ if $\cT\neq \emptyset$}, \\ 0 & \text{ if $\cT= \emptyset$}. 
\end{cases} 
\] 
Let $\cS\subset \cH$ denote the subset of all involutions of $\cH$. 
The {\em spectrum} of $\cH_L$ is defined to be 
\[ 
s(L):=\begin{cases} \min \{ m_f\mid f\in \cS\} & \text{ if $\cS\neq \emptyset$}, \\ 0 & \text{ if $\cS= \emptyset$}. 
\end{cases} 
\] 
} 
\end{defn}

In particular, if $\cH_L\cong \Z_2$, then $t(L)=0$, and $s(L)=1$ or $2$. 

\begin{lem}  
The numbers  $t(L),s(L)$ are invariants of monotone Lagrangian torus $L$ up to Hamiltonian isotopies. 
\end{lem}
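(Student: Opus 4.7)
The plan is to show that if $L_0$ and $L_1$ are Hamiltonian isotopic monotone Lagrangian tori, then $\cH_{L_0}$ and $\cH_{L_1}$ are conjugate subgroups of $G_\mu$ via an element of $G_\mu$ itself; once this is established, invariance of $t$ and $s$ reduces to an algebraic check inside $D_\infty$.

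First, take $\Phi \in \Ham(M)$ with $\Phi(L_0) = L_1$. Then $\phi \mapsto \Phi\phi\Phi^{-1}$ is a bijection $\Ham(M,L_0) \to \Ham(M,L_1)$, and on first homology it descends to conjugation by $\Phi_* \colon H_1(L_0,\Z) \to H_1(L_1,\Z)$. Because $\Phi$ is a symplectomorphism of a manifold with $c_1(M)=0$, the pullback preserves the Maslov class, $\Phi^*\mu_{L_1} = \mu_{L_0}$. In particular this forces $m_{L_0}=m_{L_1}$, so after fixing normalized bases $\{\gamma_i,\sigma_i\}$ of $H_1(L_i,\Z)$ as in the excerpt, both Maslov classes are identified with the same row $(m_L,0)$. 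The matrix of $\Phi_*$ in these bases then satisfies $\mu\circ \Phi_* = \mu$ and hence represents an element $h\in G_\mu$, so $\cH_{L_1} = h\,\cH_{L_0}\,h^{-1}$.

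Next, using the matrix forms from the proof of Proposition \ref{Gmu} (or equivalently Lemma \ref{fkfl}), I would verify the four conjugation formulas
\[
g_l g_k g_l^{-1} = g_k,\qquad f_l g_k f_l = g_{-k},\qquad g_l f_k g_l^{-1} = f_{k+2l},\qquad f_l f_k f_l = f_{2l-k}.
\]
The first two show $h\langle g_1\rangle h^{-1} = \langle g_1\rangle$ with $h g_d h^{-1} = g_{\pm d}$, so the unique nonnegative integer $d$ with $\cH_L\cap\langle g_1\rangle = \langle g_d\rangle$ is preserved under conjugation by any $h\in G_\mu$; hence $t(L_0)=t(L_1)$. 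The last two show that conjugation maps an involution $f_k$ to an involution whose subscript has the same parity as $k$, and by Example \ref{f0f1} the value $m_{f_k}$ depends only on this parity. Therefore $m_{hfh^{-1}} = m_f$ for every involution $f\in\cH_{L_0}$, and minimizing over $\cS$ yields $s(L_0)=s(L_1)$.

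The main conceptual step is the Maslov-class invariance, which is precisely what confines $\Phi_*$ to $G_\mu$ rather than a larger subgroup of $GL(2,\Z)$; without that constraint, an arbitrary conjugation in $GL(2,\Z)$ could easily permute twists with involutions or alter the $m_f$-values. The remaining verification inside the infinite dihedral group is routine matrix algebra.
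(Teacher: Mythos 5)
Your argument is correct. The first half---conjugating $\Ham(M,L_0)$ by a Hamiltonian diffeomorphism carrying $L_0$ to $L_1$, and using naturality of the Maslov class to see that the resulting matrix $h$ lies in $G_\mu$---is exactly the paper's mechanism (the paper writes $\phi_t^{-1}\circ \Ham(M,L_t)\circ \phi_t=\Ham(M,L_0)$). Where you diverge is in the final step: the paper keeps the whole isotopy $\phi_t$ and concludes ``by continuity'' that the integer-valued quantities $t(L_t),s(L_t)$ are constant in $t$, whereas you discard the isotopy after extracting the single element $h\in G_\mu$ and verify algebraically, via the conjugation identities $f_lg_kf_l=g_{-k}$, $g_lf_kg_l^{-1}=f_{k+2l}$, $f_lf_kf_l=f_{2l-k}$ (all of which check out against the matrices in the proof of Proposition \ref{Gmu}), that $t$ and $s$ are unchanged when $\cH$ is replaced by $h\cH h^{-1}$. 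Your version buys two things: it needs only the endpoint of the isotopy rather than continuity of $t\mapsto(\phi_t)_*$, and it simultaneously establishes that $t(L)$ and $s(L)$ are independent of the choice of adapted basis $\{\gamma,\sigma\}$ (the ambiguity $\gamma\mapsto\gamma+n\sigma$, $\sigma\mapsto\pm\sigma$ is itself conjugation by an element of $G_\mu$), a well-definedness point the paper's definitions leave implicit. One available shortcut for the spectrum: since $\mathrm{Fix}(hfh^{-1})=h(\mathrm{Fix}(f))$ and $h$ preserves $\mu$, one gets $m_{hfh^{-1}}=m_f$ directly from Definition \ref{mf}, without routing through the parity observation of Example \ref{f0f1}.
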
 

\begin{proof} 
Given a pair of Hamiltonian isotopic monotone Lagrangian tori $L_0,L_1$ and let
 $L_t:=\phi_t(L_0)$, $t\in [0,1]$, be a Hamiltonian isotopy between $L_0$ and $L_1$. 
 Here $\phi_t$ is the time $t$ map of a time dependent Hamiltonian vector field. The $L_t$ 
 is monotonic for all $t\in [0,1]$. Clearly $\phi_t^{-1}\circ \Ham(M,L_t)\circ \phi_t=\Ham(M,L_0)$ for 
 all $t$ and hence $\phi_t^*\cH_t=\cH_0$ where $\cH_t$ is the Hamiltonian monodromy group of $L_t$. 
 By continuity we have $t(L_t)=t(L_0)$ and  $s(L_t)=s(L_0)$ for all $t\in [0,1]$. This completes the proof. 
 \end{proof}

%
%

\section{Examples in $\R^4$: Clifford tori and Chekanov tori} \label{TT'}

\noindent
{\bf Basic properties of Lagrangian tori in $\R^4$.} 
\ Let $M=\R^4$ with the standard symplectic structure $\omega=\sum_{j=1}^2dx_j\wedge dy_j$. Readers can check that $c_1(\R^4)=0$ and $H^1(\R^4,\R)=0$. Let $\lambda$ denote a primitive of $\omega$, 
$d\lambda=\omega$. Let $L\overset{\iota}{\hookrightarrow} \R^4$ be an 
embedded torus. Using pseudoholomorphic curves, Gromov \cite{G} showed $L$ is {\em not exact}, i.e., 
the closed 1-form $\iota^*\lambda\in\Omega^1(L)$ is not exact. So the action class 
$\alpha:=[\iota^*\lambda]\in H^1(L,\R)$ is nontrivial. Polterovich \cite{P} proved that 
the Maslov class $\mu\in H^1(L,\Z)$ has divisibility 2, i.e., $2=\min \{ \mu(\gamma)\mid \gamma \in H_1(L,\Z), \ \mu(\gamma)>0\}$. 

\vspace{.1in} 
\noindent 
{\bf Clifford tori.} \  
For $a,b>0$ the {\em Clifford torus}  
\[ 
T_{a,b}:=\{ |z_1|=a,\  |z_2|=b\} \subset \R^4 
\] 
is Lagrangian. It is monotone iff $a=b$.

Using symplectic capacities introduced by  Ekeland and  Hofer, Chekanov \cite{C1} proved the following: 

\begin{prop}[Chekanov \cite{C1}] \label{Tab}   
Two Clifford tori $T_{a,b},T_{a',b'}$ are Hamiltonian isotopic iff $T_{a',b'}=T_{a,b}$ or $T_{b,a}$. 
\end{prop}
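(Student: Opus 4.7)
The statement splits into an elementary ``if'' direction and a nontrivial ``only if'' direction. The former calls for an explicit Hamiltonian isotopy carrying $T_{a,b}$ to $T_{b,a}$; the latter demands symplectic invariants strong enough to recover the unordered pair $\{a,b\}$ from the Hamiltonian-isotopy class of $T_{a,b}$.

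For the ``if'' direction, I would observe that the unitary transformation $R(z_1,z_2)=(z_2,-z_1)\in U(2)$ carries $T_{a,b}$ to $T_{b,a}$ and is the time-$1$ map of the flow of a quadratic Hamiltonian $H_0$ on $\R^4$. Since $T_{a,b}$ is compact and the entire isotopy $R_t(T_{a,b})$ lies on the sphere of radius $\sqrt{a^2+b^2}$, multiplying $H_0$ by a bump function equal to $1$ on a sufficiently large ball (and vanishing outside a slightly larger one) yields a compactly supported Hamiltonian whose time-$1$ flow still swaps the two tori.

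For the ``only if'' direction, my plan is to combine two invariants of distinct flavors. The first is cohomological: both the Maslov class $\mu\in H^1(L,\Z)$ and the action class $\alpha=[\iota^*\lambda]\in H^1(L,\R)$ are preserved by compactly supported Hamiltonian isotopy (the latter because $H^1(\R^4,\R)=0$ makes $\phi^*\lambda-\lambda$ exact). For $T_{a,b}$, in the meridional basis $\{e_1,e_2\}$ of $H_1(L,\Z)$ one has $\mu=(2,2)$ and $\alpha=(\pi a^2,\pi b^2)$. A Hamiltonian isotopy from $T_{a,b}$ to $T_{a',b'}$ induces $g\in GL(2,\Z)$ intertwining these pairs; preservation of $\mu$ forces $g=\begin{pmatrix} m & n\\ 1-m & 1-n\end{pmatrix}$ with $m-n=\pm 1$, and preservation of $\alpha$ then yields $|a^2-b^2|=|a'^2-b'^2|$. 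This constraint alone is too weak, since an infinite family of integer parameters $(m,n)$ remains admissible.

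To close the gap I would invoke the minimal symplectic area of a Maslov-$2$ $J$-holomorphic disk in $\R^4$ with boundary on $L$. For $T_{a,b}$ with the standard complex structure, the meridional disk in the factor of smaller radius shows this minimum equals $\pi\min(a^2,b^2)$. Combined with $|a^2-b^2|$ from the previous step, this determines $\{a,b\}$. The hardest step is proving the minimal area is genuinely Hamiltonian-invariant: one needs Gromov compactness together with a bubbling/transversality analysis to show that, for every compatible $J$ on $\R^4$, Maslov-$2$ $J$-holomorphic disks with boundary on $L$ exist and that their infimal area is both attained and independent of $J$. This is exactly the technical core of Chekanov's argument in \cite{C1}, which packages the same data through the Ekeland-Hofer capacities of a neighborhood of $T_{a,b}$; the capacity route replaces holomorphic disks by closed Reeb-like orbits but carries equivalent analytic weight and remains the main obstacle to a short proof.
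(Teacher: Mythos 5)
The paper does not prove this proposition at all: it is imported verbatim from Chekanov \cite{C1}, who establishes it via Ekeland--Hofer capacities, and the present paper only uses it as a black box (in the proofs of Lemmas \ref{Tb} and \ref{T'b}). So there is no internal argument to compare yours against; your proposal has to stand on its own as a reconstruction of Chekanov's theorem. The ``if'' direction is fine and is in fact carried out explicitly elsewhere in the paper (the $U(2)$ rotation $A_t$ cut off by a bump function, in the proof of Lemma \ref{Tb}). The cohomological half of your ``only if'' direction is also correct and complete: preservation of $\mu=(2,2)$ forces the induced map to have the form $\begin{pmatrix} m & n\\ 1-m & 1-n\end{pmatrix}$ with $\det=m-n=\pm1$, and preservation of $\alpha$ then gives $|a^2-b^2|=|a'^2-b'^2|$.

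The genuine gap is the second invariant. First, a formulation issue: the minimal positive symplectic area over all Maslov-$2$ \emph{classes} in $H_2(\R^4,T_{a,b})$ is the minimal positive element of the arithmetic progression $\{\pi(k a^2+(1-k)b^2)\mid k\in\Z\}$, which is generally much smaller than $\pi\min(a^2,b^2)$; you get $\pi\min(a^2,b^2)$ only by restricting to classes actually represented by $J$-holomorphic disks, and for the standard $J$ an explicit argument (Blaschke products) is needed to show these are exactly $e_1$ and $e_2$. Second, and more seriously, the Hamiltonian invariance of this quantity --- existence and area control of Maslov-$2$ disks for an arbitrary compatible $J$, via Gromov compactness and a transversality/bubbling analysis --- is precisely the content of Chekanov's theorem (in the disk formulation of \cite{C2}; the capacity formulation of \cite{C1} carries the same weight). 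You name this step but do not supply it, so the proposal is an honest and correctly structured reduction to Chekanov's analytic result rather than a proof. Given that the paper itself treats the proposition as a citation, that is a defensible place to stop, but it should be stated as such rather than as a proof.
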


\begin{lem}  \label{Tb}   

The Hamiltonian monodromy group $\cH_b$ of $T_{b,b}$ is a group of order 2, i.e., it is generated by 
a single involution, hence $t(T_{b,b})=0$. Moreover $s(T_{b,b})=2$. 
\end{lem}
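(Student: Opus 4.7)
The plan splits into two independent steps: exhibiting a nontrivial element of $\cH_b$, then ruling out every other element.

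\emph{Step 1 (an explicit involution).} First I would fix the basis of Section \ref{mono} via the product structure $\R^4 = \C \times \C$. Let $\gamma_1$ be the class of $\{|z_1|=b,\ z_2=b\}$ and $\gamma_2$ that of $\{z_1=b,\ |z_2|=b\}$; by Polterovich's theorem both have Maslov number $2 = m_L$. Set $\gamma := \gamma_1$ and $\sigma := \gamma_2 - \gamma_1$, which is primitive in $\ker\mu$. To produce an involution in $\cH_b$, I would realize the unitary swap $S(z_1,z_2)=(z_2,z_1)$ as a compactly supported Hamiltonian diffeomorphism: $S$ is the time-$1$ map of a quadratic Hamiltonian generating a path in $U(2)$, and multiplying this Hamiltonian by a smooth bump function equal to one on a ball containing $T_{b,b}$ together with all trajectories of its points for $t \in [0,1]$ gives the required element of $\Ham(\R^4, T_{b,b})$. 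The induced map interchanges $\gamma_1$ and $\gamma_2$, hence sends $\gamma \mapsto \gamma+\sigma$ and $\sigma \mapsto -\sigma$; in the matrix form of Proposition \ref{Gmu} this is $f_1$, and Example \ref{f0f1} yields $m_{f_1}=2$.

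\emph{Step 2 (the upper bound).} This is the substantive step. The plan is to show every $\phi \in \Ham(\R^4, T_{b,b})$ preserves the unordered pair $\{\gamma_1, \gamma_2\}$ inside $H_1(T_{b,b},\Z)$, by identifying a Hamiltonian-isotopy-invariant subset: the primitive Maslov-$2$ classes represented by embedded pseudoholomorphic disks with boundary on $T_{b,b}$. For the standard integrable complex structure on $\R^4$ the two meridian-disk families $\{|z_1|\le b,\ z_2=be^{i\theta}\}$ and $\{z_1=be^{i\theta},\ |z_2|\le b\}$ exhaust these, so the set equals $\{\gamma_1,\gamma_2\}$; minimality of the Maslov index excludes bubbling under a generic path of compatible almost complex structures, ensuring invariance. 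Combined with $\cH_b \subset G_\mu$ from Proposition \ref{Gmu}, so that every candidate has the form $\gamma \mapsto \gamma+c\sigma$, $\sigma \mapsto \pm\sigma$, the constraint $\phi_*\{\gamma_1,\gamma_2\}=\{\gamma_1,\gamma_2\}$ eliminates all cases except $(c,\pm)=(0,+)$ and $(c,\pm)=(1,-)$, leaving $\phi_* \in \{e, f_1\}$.

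Combining the two steps, $\cH_b=\{e, f_1\}\cong\Z_2$, so $\cT=\emptyset$ and $t(T_{b,b})=0$, while $\cS=\{f_1\}$ with $m_{f_1}=2$ gives $s(T_{b,b})=2$. The main obstacle is clearly Step 2, specifically the sharp identification of the Maslov-$2$ disk classes for $T_{b,b}$: once this pseudoholomorphic input is in place, the remainder is bookkeeping inside the infinite dihedral group $G_\mu$ set up in Section \ref{mono}.
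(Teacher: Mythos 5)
Your Step 1 coincides with the paper's construction: the paper realizes the coordinate swap as the time-one map of a path $A_t$ in $U(2)$, cuts the quadratic Hamiltonian off outside a large polydisc, and reads off the monodromy $\tilde{f}_1$ (which is your $f_1$ written in the basis $\{\gamma,\gamma'\}$ of Remark \ref{tilde}), with $m_{f_1}=2$ coming from Example \ref{f0f1}. Your Step 2, however, is a genuinely different route. The paper uses no holomorphic disks there: it takes a Weinstein-type embedding $\Phi:U_L\to\R^4$ under which the parallel Lagrangians $L_{c_1,c_2}=\{s_1=c_1,\ s_2=c_2\}$ become the nearby Clifford tori $T_{\sqrt{b^2-c_1/\pi},\,\sqrt{b^2-c_2/\pi}}$, normalizes a hypothetical $\phi$ with monodromy $\tilde{g}_k$ to be linear near $L$, and observes that $\phi$ would then carry $T_{b,\sqrt{b^2-\epsilon/\pi}}$ to $T_{\sqrt{b^2+k\epsilon/\pi},\,\sqrt{b^2-(1-k)\epsilon/\pi}}$ for all small $\epsilon>0$, contradicting Chekanov's classification of Clifford tori (Proposition \ref{Tab}) unless $k=0$; the identity $\tilde{f}_k\tilde{f}_l=\tilde{g}_{k-l}$ then forbids a second involution. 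So the paper's external input is Proposition \ref{Tab} (proved by Chekanov via Ekeland--Hofer capacities), while yours is the classification and invariance of Maslov-$2$ disk classes, essentially the machinery of Chekanov's second proof; the paper's version buys a proof that is pure bookkeeping in $D_\infty$ once Proposition \ref{Tab} is quoted, whereas yours must set up moduli of disks (transversality, Gromov compactness, exclusion of low-index multiple covers).

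One caveat on Step 2 as you state it: the set of classes merely \emph{represented by} a $J$-holomorphic Maslov-$2$ disk is not obviously invariant under Hamiltonian isotopy, since such disks could appear or cancel as $J$ varies. The invariant object (for a monotone torus of minimal Maslov number $2$ in $\C^2$, where sphere bubbling is vacuous) is the set of classes whose algebraic count of Maslov-$2$ disks through a generic point is nonzero; you would need to phrase it this way and justify that for the standard complex structure these counts are $\pm1$ exactly in the two meridian classes. With that repair, your reduction inside $G_\mu$ to $(c,\pm)\in\{(0,+),(1,-)\}$ is correct and gives $\cH_b=\{e,f_1\}$, hence $t(T_{b,b})=0$ and $s(T_{b,b})=2$ as claimed.
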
 

\begin{proof} 
We take $\gamma\in H_1(T_{b,b},\R)$ to be the class represented by the curve 
$\{ (be^{i\theta},b)\in \C\times \C\}\mid \theta\in [0,2\pi]\}$. We also take $\gamma' \in H_1(T_{b,b},\R)$ 
 to be the class represented by the curve 
 $\{ (b,be^{i\theta})\in \C\times \C\}\mid \theta\in [0,2\pi]\}$. It can be checked that $\mu(\gamma)=2=\mu(\gamma')$. 
With this understood we 
 adapt the notations $\tilde{f}_k,\tilde{g}_k$ for monodromies from Remark 
 \ref{tilde} in Section \ref{mono}.

We identify $T_{b,b}$ with $L=\R/\Z\times \R/\Z$ with coordinates $(t_1,t_2)$ 
so that, for $t\in \R/\Z$, 
\begin{equation}  \label{t12} 
\text{$\{ (t,0)\}$ represents the class $\gamma$, and $\{ (0, t)$\}  represents the class $\gamma'$}. 
\end{equation}  
Also let $(s_1,s_2)$ be the dual coordinates for fibers of the cotangent bundle $T^*L$. 
The cotangent bundle $(T^*L,-d\lac))$ is symplectic, where $\lac$ is the canonical 
1-form (see \cite{MS}), $\lac=s_1dt_1+s_2dt_2$. 
We also use the identification $\R^4\cong \C^2=\{ (r_1e^{\sqrt{-1}\theta_1},r_2e^{\sqrt{-1}\theta_2})\mid r_i\geq 0, \ \theta_i\in \R/2\pi \Z\}$. 

Now consider the map $\Phi:T^*L\to \R^4\cong \C^2=\{ (r_1e^{\sqrt{-1}\theta_1},r_2e^{\sqrt{-1}\theta_2})\} $, 
\[ 
\Phi(t_1,t_2,s_1,s_2):=\Big(\sqrt{b^2-\frac{s_1}{\pi}}e^{2\pi\sqrt{-1}t_1}, 
\sqrt{b^2-\frac{s_2}{\pi}}e^{2\pi\sqrt{-1}t_2}\Big). 
\] 
The map $\Phi$ is defined on the domain $U_L:=\{ s_1<\pi b^2,s_2<\pi b^2\}$, and is a symplectic embedding from $U_L$ into $\R^4$, $T_{b,b}\subset \Phi(U_L)$. 

Consider the primitive 1-form $\lambda=\frac{1}{2}(r^2_1d\theta_1+r^2_2d\theta_2)$ 
of $\omega$. We have $\Phi^*\lambda=\pi b^2(dt_1+dt_2)-\lac$. Also, 
Let $ L_{c_1,c_2}:=\{ s_1=c_1,s_2=c_2\}\subset U_L$, then 
\begin{equation} \label{Lc1c2} 
\Phi(L_{c_1,c_2})=T_{\sqrt{b^2-\frac{c_1}{\pi}},\sqrt{b^2-\frac{c_2}{\pi}}}. 
\end{equation} 

In the following we use $\Phi$ to identify a small neighborhood of $T_{b,b}$ with 
$U_\delta:=\{ |s_1|<\delta, |s_2|<\delta\}$ for $\delta>0$ small.

We claim that  $\tilde{g}_k\not\in \cH_b$ for any $k\neq 0$. Assume in the 
contrary that $\tilde{g}_k\in \cH_b$ for some $k\neq 0$. 
Let $\phi\in \Ham(\R^4,T_{b,b})$ be one with $\phi_*=g_k$. 
Modifying $\phi$ by a $L$-preserving Hamiltonian isotopy if necessary, we may 
assume that, on $U_\delta$ for some $\delta>0$,  
\[ 
\phi (t_1,t_2,s_1,s_2)=((1-k)t_1-kt_2,kt_1+(1+k)t_2, (1+k)s_1-ks_2,ks_1+(1-k)s_2).  
\] 
Then, by taking $c_1=0$ and $c_2=\epsilon>0$ very small, we have 
\[ 
\phi(L_{0,\epsilon})=L_{-k\epsilon,(1-k)\epsilon}. 
\] 
and hence (via $\Phi$) 
\[ 
\phi(T_{b,\sqrt{b^2-\frac{\epsilon}{\pi}}})=T_{\sqrt{b^2-\frac{-k\epsilon}{\pi}},\sqrt{b^2-\frac{(1-k)\epsilon}{\pi}}}. 
\] 
It then implies that, for all $\epsilon>0$ small enough, the Clifford tori 
$T_{b,\sqrt{b^2-\frac{\epsilon}{\pi}}}$ and $T_{\sqrt{b^2-\frac{-k\epsilon}{\pi}},\sqrt{b^2-\frac{(1-k)\epsilon}{\pi}}}$ are Hamiltonian isotopic, 
which cannot be possible by Proposition \ref{Tab}, unless $k=0$. Thus $\tilde{g}_k\not\in \cH_b$ for any $k\neq 0$. Hence $t(T_{b,b})=0$. 

Note that since $\tilde{f}_k\tilde{f}_l=\tilde{g}_{k-l}$, $\cH_b$ can contain at most one involution. In fact there exists a 
Hamiltonian self-isotopy of $T_{b,b}$ with monodromy $\tilde{f}_1=\begin{pmatrix} 0 & 1\\ 
1 & 0\end{pmatrix}$. To see this, first let us consider the 
path in the unitary group $U(2)$ defined by 
\[ 
A_t:=\begin{pmatrix} \cos \frac{\pi  t}{2} & -\sin \frac{\pi  t}{2} \\ \sin \frac{\pi  t}{2} & \cos \frac{\pi  t}{2} \end{pmatrix}\in GL(2,\C), \quad 0\leq t\leq 1. 
\] 
$A_t$ acts on $\C^2$, is the time $t$ map of the Hamiltonian vector field 
$X=\frac{\pi}{2}(x_1\partial_{x_2}-x_2\partial_{x_1}+y_1\partial_{y_2}-y_2\partial_{y_1})$, $\omega(X, \cdot )=-dH$, $H=\frac{\pi}{2}(x_2y_1-x_1y_2)$. Observe that $A_1(T_{a,b})=T_{b,a}$,  $(A_1)_*=\tilde{f}_1$ on $H_1(T_{b,b},\Z)$.
Fix $b>0$ and modify $H$  to get a $C^\infty$ function $\tilde{H}$ with compact support such that $\tilde{H}=H$ on $\{ |z_1|\leq 2b, \ |z_2|\leq 2b\}$. Let $\phi_t$ be the time $t$ map of the flow of the Hamiltonian vector field associated to $\tilde{H}$. Then $\phi_1(T_{b,b})=(T_{b,b})$, and $(\phi_1)_*=(A_1)_*=\tilde{f}_1$ on $H_1(T_{b,b},\Z)$. Hence $\cH_b$ is a group of order $2$ generated by the involution $\tilde{f}_1$. Hence $s(T_{b,b})=2$ by Remark \ref{tilde}, 
Proposition \ref{012} and Remark \ref{f0f1}. 
\end{proof}

\vspace{.1in} 
\noindent 
{\bf Chekanov tori.} \ 
Now we  consider another type of monotone Lagrangian tori in $\R^4$: the Chekanov tori (called {\em special} tori in \cite{C1}).  Consider the diffeomorphism $\rho:T^*S^1=S^1\times \R\to E:=\R^2_{x_1,x_2}\setminus \{ (0,0)\}$ defined by $\rho(\theta,s)=(e^s \cos\theta, e^s\sin\theta)$. 
The  corresponding map $\rho^*:T^*E\subset \R^4\to T^*(T^*S^1)$ is a symplectomorphism between 
two cotangent bundles. Let 
\[
\Psi:=(\rho^*)^{-1}:T^*(T^*S^1)=(T^*S^1)\times \R^2\to T^*E=E\times \R^2_{y_1,y_2}\subset \R^4
\]
 be the inverse symplectic map. 
 Let $(\theta,s)\in S^1\times \R$ be coordinates for $T^*S^1$, $(\theta^*,s^*)$ be 
 the dual coordinate for the fiber of $T^*(T^*S^1)$. 
Let $(x_1,x_2,y_1,y_2)$ be coordinates for $E\times \R^2$. 
Then 
\[ 
\Psi(\theta,s,\theta^*,s^*)=(e^s\cos\theta, e^s\sin\theta, e^{-s}(-\theta^*\sin\theta
+s^*\cos\theta),e^{-s}(\theta^*\cos\theta+s^*\sin\theta)). 
\] 
 
From now on, we will identify $T^*(T^*S^1)$ with its image in $\R^4$ via $\Psi$. 

For $a\in \R$ and $b>0$ the torus 
\[ 
T'_{a,b}:=\{ \theta^*=a, \ s^2+(s^*)^2=b^2\} 
\] 
is Lagrangian. Moreover, for $a\neq 0$, $T'_{a,b}$ is Hamiltonian isotopic to the 
Clifford torus $T_{b,b+|a|}$ by Chekanov \cite{C1}. The $a=0$ case is special. We call the special torus 
$T'_{0,b}$ a {\em Chekanov torus}. $T'_{0,b}$ is monotone, is Lagrangian isotopic to 
$T_{b,b}$ but not Hamiltonian isotopic to $T_{b,b}$.  \cite{C1,C2}.

\begin{lem}  \label{T'b}   
The Hamiltonian monodromy group $\cH'_b$ of $T'_{0,b}$ is a group of order 2, i.e., it is generated by 
a single involution, hence $t(T'_{0,b})=0$. Moreover $s(T'_{0,b})=1$. 
\end{lem}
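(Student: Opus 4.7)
The plan is to follow the template of Lemma~\ref{Tb}: fix a natural basis for $H_1(T'_{0,b},\Z)$ so that Proposition~\ref{Gmu} applies, exhibit an explicit $U(2)$-generated involution in $\cH'_b$, and then rule out all twists by a comparison with Chekanov's classification of Clifford tori (Proposition~\ref{Tab}).

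I will first set up the basis and the involution together. Parametrise $T'_{0,b}$ by $(\theta,\phi)$ via $(s,s^*)=(b\cos\phi,b\sin\phi)$, so that $\Psi$ writes the point as $(w(\phi)\cos\theta,w(\phi)\sin\theta)\in\C^2$ with $w(\phi)=e^{b\cos\phi}+ibe^{-b\cos\phi}\sin\phi$. A direct computation of the complex determinant of the tangent Lagrangian frame $(\partial_\theta,\partial_\phi)$ gives $\det_\C^2$ proportional to $w^2(w')^2$. Since $\mathrm{Re}(w)>0$ forces the winding of $w$ to vanish while $w'$ is nonvanishing with winding $+1$, this yields $\mu(\gamma_\phi)=2$ and $\mu(\gamma_\theta)=0$. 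I therefore take $\gamma:=\gamma_\phi$ and $\sigma:=\gamma_\theta$ as the standard basis and identify monodromies with the matrices $f_k,g_k$ from the proof of Proposition~\ref{Gmu}. Next, consider the $U(2)$-element $R(z_1,z_2)=(z_1,-z_2)$, the time-$1$ map of $H=\tfrac{\pi}{2}(x_2^2+y_2^2)$: pulled back via $\Psi$, $R$ acts as $(\theta,s,\theta^*,s^*)\mapsto(-\theta,s,-\theta^*,s^*)$, so it preserves $T'_{0,b}$, fixes $\gamma_\phi$ and reverses $\gamma_\theta$; its monodromy is therefore $f_0$. Truncating $H$ outside a large ball containing $T'_{0,b}$ produces a compactly supported element of $\Ham(\R^4,T'_{0,b})$ with monodromy $f_0$, and by Example~\ref{f0f1} one has $m_{f_0}=1$.

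The main step is to exclude twists. I will introduce Darboux coordinates $t_1=\theta$, $t_2=-\phi$, $s_1=\theta^*$, $s_2=\tfrac12(s^2+(s^*)^2-b^2)$ on a neighbourhood $U_\delta$ of $T'_{0,b}$, so that $\omega=dt_1\wedge ds_1+dt_2\wedge ds_2$, the torus $T'_{0,b}$ is the zero section, and each $T'_{a,b'}$ is precisely the constant Lagrangian section $\{s_1=a,\ s_2=\tfrac12((b')^2-b^2)\}$. Suppose, for contradiction, that $g_k\in\cH'_b$ for some $k\neq 0$, and let $\phi\in\Ham(\R^4,T'_{0,b})$ realise it. After an $L$-preserving Hamiltonian isotopy I may arrange that $\phi$ on $U_\delta$ is the linear symplectomorphism induced by $g_k$ in the $(t_1,t_2)$-basis. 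A change-of-basis calculation between $(\gamma,\sigma)$ and $([\partial_{t_1}],[\partial_{t_2}])$ rewrites $g_k$ as $\begin{pmatrix}1&-k\\0&1\end{pmatrix}$, so that $\phi(t_1,t_2,s_1,s_2)=(t_1-kt_2,t_2,s_1,ks_1+s_2)$ and hence $\phi(T'_{a,b'})=T'_{a,\sqrt{(b')^2+2ka}}$ for all small $(a,b'-b)$. For $a\neq 0$, applying Chekanov's isotopy $T'_{a,b'}\sim T_{b',b'+|a|}$ on both sides yields a Hamiltonian isotopy between $T_{b',b'+|a|}$ and $T_{\sqrt{(b')^2+2ka},\sqrt{(b')^2+2ka}+|a|}$; Proposition~\ref{Tab} then forces the ordered pairs of radii to agree or to swap, and either case collapses to $ka=0$, contradicting the choice of $k\neq 0$ and $a\neq 0$.

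Combining these steps, $\cH'_b$ contains no twist and contains the involution $f_0$, so by Lemma~\ref{H} it equals $\{e,f_0\}\cong\Z_2$; hence $t(T'_{0,b})=0$ and $s(T'_{0,b})=m_{f_0}=1$. The chief obstacle will be the third step: confirming that $\{T'_{a,b'}\}$ really becomes the family of constant sections in the Darboux chart, and then reducing $\phi$ on $U_\delta$ to linear normal form by a $T'_{0,b}$-preserving Hamiltonian correction so that Proposition~\ref{Tab} can be invoked. Once that normal form is secured, the comparison closes the argument exactly as in the Clifford case.
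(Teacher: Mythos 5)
Your proposal is correct and follows essentially the same route as the paper's proof: rule out twists by linearizing a hypothetical $g_k$ in a Weinstein chart around $T'_{0,b}$, pushing the nearby tori $T'_{a,b'}$ to Clifford tori via Chekanov's isotopy and invoking Proposition~\ref{Tab}, then realize the involution $f_0$ by a truncated unitary rotation and read off $s(T'_{0,b})=1$ from Example~\ref{f0f1}. The only differences are cosmetic: you use polar action--angle coordinates where the paper uses the explicit embedding $\Phi'$, and the rotation $z_2\mapsto -z_2$ where the paper uses $z_1\mapsto -z_1$.
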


\begin{proof} 
Let $\gamma\in H_1(T'_{0,b},\Z)$ be represented by the curve 
$(\theta,s,\theta^*,s^*)=(0,b\cos t,0,b\sin t)$, $\sigma \in H_1(T'_{0,b},\Z)$ be represented by the curve 
$(\theta,s,\theta^*,s^*)=(t,b,0,0)$. It can be verified that $\mu(\gamma)=2$ and $\mu(\sigma)=0$. With this understood we adapt the notations $f_k,g_k$ for monodromies from the proof of Proposition \ref{Gmu}.

We identify $T_{b,b}$ with $L=\R/\Z\times \R/\Z$ with coordinates $(t_1,t_2)$ 
so that, for $t\in \R/\Z$, 
\begin{equation}  \label{t'12} 
\text{$\{ (t,0)\}$ represents the class $\gamma$, and $\{ (0, t)$\}  represents the class $\sigma$}. 
\end{equation}  
Also let $(s_1,s_2)$ be the dual coordinates for the fiber of $T^*L$. . 
The cotangent bundle $(T^*L,-d\lac))$ is symplectic, where $\lac$ is the canonical 
1-form (see \cite{MS}), $\lac=s_1dt_1+s_2dt_2$.

Consider the map $\Phi':T^*L\to T^*(S^1\times \R)=\{ (\theta, s,\theta^*,s^*)\}$, 
\[ 
\Phi'(t_1,t_2,s_1,s_2):=(t_2,\sqrt{b^2-\frac{s_1}{\pi}}\cos 2\pi t_1, s_2, \sqrt{b^2-\frac{s_1}{\pi}}\sin 2\pi t_1).  
\] 
The map $\Phi'$  is defined on the domain $U'_L:=\{ s_1<\pi b^2\} \subset T^*L$, and is a symplectic embedding  from $U'_L$ into $T^*(S^1\times \R)$.  Also $T'_{0,b}\subset \Phi'(U'_L)$. 

Let $ L_{c_1,c_2}:=\{ s_1=c_1,s_2=c_2\}\subset U'_L$, then for $c_2\neq 0$, 
\begin{equation} \label{L'c1c2} 
\Psi\circ \Phi'(L_{c_1,c_2})=\Psi\Big(T'_{c_2,\sqrt{b^2-\frac{c_1}{\pi}}}\Big)=
T_{\sqrt{b^2-\frac{c_1}{\pi}},\sqrt{b^2-\frac{c_1}{\pi}}+|c_2|}, 
\end{equation} 
where the second equality is up to a Hamiltonian isotopy.

Assume there exists $\phi\in\Ham(\R^4,T'_{0,b})$ with monodromy $g_k$ for some $k\in \Z$. 
Modifying $\phi$ by a $L$-preserving Hamiltonian isotopy if 
necessary we may assume that, for some $\delta>0$ small enough, 
\[ 
\phi (\theta_1,\theta_2,s_1,s_2)=(\theta_1,k\theta_1+\theta_2, s_1-ks_2,s_2) \quad \text{on $U'_\delta$}.  
\] 
Then $\phi(L_{c_1,c_2})=L_{c_1-kc_2,c_2}$. 

Now, by taking $c_1=c_2=\epsilon>0$ very small, we have 
\[ 
\phi(L_{\epsilon,\epsilon})=L_{(1-k)\epsilon,\epsilon}. 
\] 
and hence (via $\Psi\circ \Phi'$, see (\ref{L'c1c2})) 
\[ 
\phi(T_{\sqrt{b^2-\frac{\epsilon}{\pi}},\sqrt{b^2-\frac{\epsilon}{\pi}}+\epsilon})=T_{\sqrt{b^2-\frac{(1-k)\epsilon}{\pi}},\sqrt{b^2-\frac{(1-k)\epsilon}{\pi}}+\epsilon}, 
\] 
It then implies that the Clifford tori 
$T_{\sqrt{b^2-\frac{\epsilon}{\pi}},\sqrt{b^2-\frac{\epsilon}{\pi}}+\epsilon}$ and 
$T_{\sqrt{b^2-\frac{(1-k)\epsilon}{\pi}},\sqrt{b^2-\frac{(1-k)\epsilon}{\pi}}+\epsilon}$  are Hamiltonian isotopic for all $\epsilon>0$ small enough, 
which cannot be possible by Proposition \ref{Tab}, unless $k=0$. Thus $g_k\not\in \cH'_b$ for any $k\neq 0$. Hence $t(T'_{0,b})=0$.

Then $\cH'_b$ can contain at most one involution. Below we will construct  $\phi\in \Ham(\R^4,T'_{0,b})$ 
with monodromy $\phi_*=f_0=\begin{pmatrix}1 & 0\\ 0& -1\end{pmatrix}$. First observe that $T'_{0,b}$ is contained in the hyper-surface 
$\{ \theta^*=0\} \subset T^*(S^1\times \R)$ and hence in (via $\Psi$) 
\begin{equation} \label{theta*=0} 
\{ (x_1,y_1,x_2,y_2)=(e^s\cos\theta, e^{-s}s^*\cos\theta,e^s\sin\theta, e^{-s}s^*\sin\theta)\} \subset \R^4. 
\end{equation} 
For $t\in [0,1]$ the symplectomorphism $A_t:\C^2\to \C^2$, $A_t(z_1,z_2)=(e^{i\pi t}z_1,z_2)$,   
is the time $t$ map of the Hamiltonian vector field $X=\pi(x_1\partial_{y_1}-y_1\partial_{x_1})$ 
whose Hamiltonian function is $H=\frac{\pi}{2}|z_1|^2$. 
Observe that $A_1$ preserves the hyper-surface in (\ref{theta*=0})  and send the point in (\ref{theta*=0}) 
to the point 
\begin{equation} 
\begin{split} 
(x_1,y_1,x_2,y_2) & =(-e^s\cos\theta,- e^{-s}s^*\cos\theta,e^s\sin\theta, e^{-s}s^*\sin\theta) \\ 
 & = (e^s\cos(\pi -\theta),e^{-s}s^*\cos(\pi -\theta),e^s\sin(\pi -\theta), e^{-s}s^*\sin(\pi -\theta) ).  
\end{split}
\end{equation}  
Hence $A_1(T'_{0,b})=T'_{0,b}$.  Moreover, $(A_1)_*\gamma=\gamma$ and $(A_1)_*\sigma=-\sigma$, 
i.e., $(A_1)_*=f_0$ on $H_1(T'_{0,b},\Z)$. 

We modify $H$ to get 
$\tilde{H}\in C^\infty(\R^4)$ with compact support, such that $\tilde{H}=H$ on $\{ |z_1|\leq e^{2b}, \ 
|z_2|\leq e^{2b}\}$. Let $\phi_t$ be the time $t$ map of the flow of the Hamiltonian vector field 
associated to $\tilde{H}$. Then $\phi_1(T'_{0,b})=T'_{0,b}$ and $(\phi_1)_*=(A_1)_*=f_0$ on 
$H_1(T'_{0,b},\Z)$. So $\cH_b$ is generated by the involution $f_0$ and $s(T'_{0,b})=1$ by 
Proposition \ref{012}, Remark \ref{f0f1} and Definition \ref{st}. 
\end{proof} 

Lemma \ref{Tb} and Lemma \ref{T'b} together imply Theorem \ref{main}.

\vspace{.2in} 
\noindent 
{\bf Final discussion.} \  
We end this note with the following open questions. 

\begin{ques} 
{\rm 
Let $L\subset \R^4$ be any monotone Lagrangian torus. Is it true that $\cH_L\cong \Z_2$? 
} 
\end{ques} 

\begin{ques} 
{\rm 
Let $L\subset (\R^4,\omega)$ be either a monotone Clifford torus $T_{b,b}$ or a Chekanov torus $T'_{0,b}$. Let $B\subset \R^4$ be an open  4-ball containing $L$. Assume there is a symplectic embedding 
$\phi:(B,\omega)\to (M,\omega_M)$, where $M$ is symplectic with $c_1(M)=0$ and $H^1(M,\R)=0$.  Let $\cH^B$ (resp. $\cH^M$) denote the Hamiltonian monodromy group of $L$ in $B$ (resp. in $M$). Via the inclusion $\phi$, $\cH^B\cong \Z_2$ is a subgroup of $\cH^M$. Is it possible that $\cH^B$ is a proper subgroup of $\cH^M$? 
}
\end{ques}

 \begin{ques} 
 {\rm 
 How to extend the constructions of $t(L),s(L)$ to higher dimensional cases, to distinguish monotone Lagrangian tori in $\R^{2n}$ and beyond? 
 } 
 \end{ques}

\section*{Acknowledgements}
The author thanks Liang-Chung Hsia for references on reflection groups. 
The author also thanks an anonymous referee for pointing out typos and minor mistakes in an earlier draft of this paper.

\vspace{.1in } 
Department of Mathematics, National Central University,  Chung-Li, Taiwan. 

{\em Email address}: yau@math.ncu.edu.tw

\end{document}